\documentclass[12pt]{amsart}
\usepackage{graphicx}
\usepackage{amsfonts}
\usepackage{amssymb}
\usepackage{latexsym}
\usepackage{amscd}

\newtheorem{theorem}{Theorem}[section]

\newtheorem{lemma}[theorem]{Lemma}
\newtheorem{remark}[theorem]{Remark}

\newtheorem{proposition}[theorem]{Proposition}
\theoremstyle{definition}
\newtheorem{definition}{Definition}[section]

\newenvironment{pf*}[1]{\proof[#1]}{\endproof}
\usepackage{euscript}

\usepackage[OT2,OT1]{fontenc}

\newcommand{\bbC}{\mathbb{C}}

\newcommand{\cal}[1]{{\mathcal #1}}

\theoremstyle{remark}

\newcommand{\eps}{\epsilon}

\numberwithin{equation}{section}

\newcommand{\cA}{{\cal A}}
\newcommand{\cI}{{\cal I}}

\newcommand{\cM}{{\cal M}}
\newcommand{\cV}{{\cal V}}

\newcommand{\cT}{{\cal T}}

\newcommand{\cP}{{\cal P}}

\newcommand{\cR}{{\cal R}}

\newcommand{\RR}{{\Bbb R}}

\begin{document}
\addtolength{\evensidemargin}{-0.7in}
\addtolength{\oddsidemargin}{-0.7in}

\title[Renormalization of analytic GIETs]{Renormalization and rigidity of analytic non-linear interval exchange transformations.}
\author{Goncharuk, N., Gorbovickis, I., Yampolsky, M.}
\date{\today}
\maketitle

\begin{abstract}
We construct hyperbolic renormalization horseshoes for analytic generalized interval exchange transformations (GIETs) of bounded type and with bounded geometry, provided their boundaries are small (but not necessarily zero). As a consequence, we describe the manifold structure of smooth conjugacy classes of GIETs.
  \end{abstract}

\section{Introduction}
  Renormalization technique, known as (accelerated) Rauzy-Veech induction is, of course, fundamental to the study of interval exchange transformations (IETs), see e.g. \cite{YoccozIET}. Recently, there has been much development in extending some of the results to the non-linear generalized interval exchange transformations -- GIETs (see e.g. \cite{GhU}). This is a step from understanding the action of renormalization on a finite-dimensional space of translations or, at worst, affine maps to an infinite-dimensional setting. Even in the case of GIETs of two intervals, which corresponds to maps of the circle, renormalization theory is very highly non-trivial (see e.g. the works of one of the authors \cite{Ya3,Ya4} and references therein).

  Recently, there was a beautiful paper of S.~Ghazouani \cite{Gh} in which he described the structure of the stable manifolds of renormalization of periodic IETs in the space of smooth GIETs, recovering a part of the renormalization picture. However, in the smooth setting, renormalization is not a differentiable operator. This is an old and a well-known obstacle. Ghazouani finessed it at the expense of restrictions on the combinatorial type of mappings considered -- periodic, and with a single singularity on the associated translation surface.

  In the present note, we explore a different approach, similar to our previous  works \cite{Ya3,GY,KAM-yam,GorYa} and others -- in which we define a renormalization operator $\cR$ acting on the space of analytic GIETs. Renormalization transformation $\cR$ is a compact analytic operator in a complex Banach manifold. We study its hyperbolic properties on a neighborhood of IETs of bounded type and bounded geometry. This operator is hyperbolic, and has
  finitely many expanding directions. Standard considerations of Hadamard-Perron Theorem can then be used to construct analytic stable manifolds, which coincide with smooth rigidity classes. While rigidity statements exist in the literature for wider combinatorial classes of GIETs (cf. \cite{GhU,BeTr}) our results endow rigidity classes with natural submanifold structure, similarly to \cite{Gh} but with significantly less technical effort.

  Finally, stability of hyperbolicity allows us to perturb our results to obtain similar results for projective IETs -- whose maps are M\"obius rather than affine. These results are completely novel in any setting.

\section{Interval exchange transformations}

An interval exchange transformation (IET) $T$ is a one-to-one map of the unit interval $[0,1]$ onto itself that is piecewise a translation.  Formally, we have a partition $[0,1] = \bigcup_{1\le j \le d} I_j$, and $T$ is a translation on each $I_j$.

An IET $T$ is defined by the following data: a tuple of real positive numbers  $\{\lambda_j\}_{j=1}^{d}$  with $\sum \lambda_j=1$ (lengths of the intervals $I_j$); a substitution $\sigma_t \colon \{1,\dots, d\}\to \{1, \dots, d\}$ that defines the order of the intervals before the transformation; and a substitution $\sigma_b \colon \{1,\dots, d\}\to \{1, \dots, d\}$ that defines the order of the intervals after the transformation. Here t stands for top and b stands for bottom.

In the next section, we will introduce generalized interval exchange transformations (GIETs).

\section{Space of  analytic GIETs $\cM_{\sigma, \eps}$}


As above, let $\sigma_t, \sigma_b \colon \{1,\dots, d\}\to \{1, \dots, d\}$ be two substitutions. Let $U_\eps$ be an $\eps$-neighborhood of $[0,1]$ in $\bbC$. Let $\cM_{\eps}$ be the Banach manifold of bounded analytic maps in $U_\eps$ that extend continuously to $\overline U_{\eps}$, equipped with the uniform norm.
\begin{definition}
 An analytic generalized interval exchange transformation (GIET) $T$ with substitutions $\sigma_t, \sigma_b$ is a tuple of analytic maps $f_j\in \cM_\eps$  and points $(a_0=0, a_1,\dots, a_{d-1}, a_d=1)$ in $U_\eps$ such that for $\tau=\sigma_b\sigma_t^{-1}$,

 \begin{itemize}
\item if $\tau(k)=d$,  we have  $f_k(a_k)=a_d$;

\item if $\tau(k)=1$, we have $f_k(a_{k-1})=a_0$;

\item if $\tau(k)+1=\tau(l)$, we have  $f_k (a_{k}) = f_l(a_{l-1})$.

 \end{itemize}
\end{definition}


\noindent
We note:
\begin{proposition}
Analytic GIETs with a permutation $\sigma = (\sigma_t, \sigma_b)$ form an analytic Banach manifold $\cM_{\sigma, \eps}$.
\end{proposition}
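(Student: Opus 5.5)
The plan is to realize $\cM_{\sigma,\eps}$ as the zero set of an analytic submersion on an ambient Banach manifold and then apply the implicit function theorem for Banach spaces. Take the ambient space
\[
\mathcal{E}=\cM_\eps^{\,d}\times \bbC^{d-1},
\]
whose points are tuples $(f_1,\dots,f_d;a_1,\dots,a_{d-1})$, with $a_0=0$ and $a_d=1$ fixed. Restrict $a_1,\dots,a_{d-1}$ to the open subset of $U_\eps$ (the $a_j$ are required to lie in $U_\eps$), so $\mathcal{E}$ is an open subset of a complex Banach space. The defining conditions give $d$ equations, one for each $k$, indexed by the value $\tau(k)$. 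Collect them into a map $\Phi\colon\mathcal{E}\to\bbC^{d}$, whose $k$-th component is one of
\[
f_k(a_k)-1,\qquad f_k(a_{k-1}),\qquad f_k(a_k)-f_l(a_{l-1}),
\]
according to the case. Then $\cM_{\sigma,\eps}=\Phi^{-1}(0)$.

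First I would check that $\Phi$ is analytic. The evaluation map $\cM_\eps\times U_\eps\ni(f,z)\mapsto f(z)$ is analytic: it is linear and bounded in $f$, holomorphic in $z$, and jointly continuous by Cauchy estimates on compact subsets of $U_\eps$. Its derivative is
\[
(\delta f,\delta z)\mapsto \delta f(z)+f'(z)\,\delta z.
\]

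Second, I would show that $D\Phi$ is surjective at every point. I would use only the variations $\delta f_k$, which is the key simplification. Each equation involves $f_k$ through exactly one evaluation point, either $a_k$ or $a_{k-1}$. Hence the equation associated with $k$ is changed by adding the constant function $c$ to $f_k$, since constants lie in $\cM_\eps$. Equation $k$ changes by $c$ (up to sign), while the other equations involving $f_k$ change as follows:
\begin{itemize}
\item In the ``$\tau(k')+1=\tau(k)$'' case, $f_k$ appears in the equation of $k'$, through $a_{k-1}$.
\end{itemize}
This coupling is triangular in the ordering by $\tau$. Solving successively in increasing $\tau(k)$, or alternatively choosing a polynomial perturbation $\delta f_k$ vanishing at one of the two points $a_{k-1},a_k$ and equal to a prescribed value at the other (possible when $a_{k-1}\neq a_k$), we obtain arbitrary targets. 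Because the target is finite dimensional, $\ker D\Phi$ has finite codimension and is therefore complemented. So $\Phi$ is a submersion, and the analytic implicit function theorem gives that $\Phi^{-1}(0)$ is an analytic Banach submanifold of codimension $d$.

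The main point to get right is this surjectivity bookkeeping. Specifically, I have to verify that the linear system given by constant perturbations is invertible. Each $f_k$ appears in the equation indexed by $\tau(k)$ via $f_k(a_k)$, and possibly in the equation indexed by $\tau(k)+1$ via $f_k(a_{k-1})$. Since $\tau$ is a bijection, the matrix obtained by ordering the equations by $\tau$ is bidiagonal with $\pm1$ entries, hence invertible. If a configuration allows $a_{k-1}=a_k$ (degenerate points in $U_\eps$), constant perturbations still work, so no nondegeneracy assumption on the points is needed.
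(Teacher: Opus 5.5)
The paper states this proposition without proof, and your route is the natural one: realize the GIETs as the zero set of an analytic map on $\mathcal E=\cM_\eps^d\times U_\eps^{d-1}$, note that a finite-dimensional target makes the kernel complemented, and apply the implicit function theorem. Your treatment of the analyticity of evaluation is fine. The gap is in the surjectivity bookkeeping you flagged yourself, because you miscount the constraints. Write $k_m=\tau^{-1}(m)$. The definition imposes $d+1$ equations, not $d$:
\[
\Phi_0=f_{k_1}(a_{k_1-1}),\qquad \Phi_m=f_{k_m}(a_{k_m})-f_{k_{m+1}}(a_{k_{m+1}-1})\ \ (1\le m\le d-1),\qquad \Phi_d=f_{k_d}(a_{k_d})-1 .
\]
Equivalently, the $2d$ image endpoints must match $d+1$ points, two of which are pinned at $0$ and $1$. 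The index $k_1$ carries two conditions, and your ``one component for each $k$'' drops one of them. So your $\Phi^{-1}(0)$ strictly contains $\cM_{\sigma,\eps}$, and the codimension is $d+1$, not $d$.

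Once the missing equation is restored, constant perturbations no longer suffice. Setting $\delta f_k\equiv c_k$ produces $(c_{k_1},\,c_{k_1}-c_{k_2},\dots,c_{k_{d-1}}-c_{k_d},\,c_{k_d})$, and these $d$ parameters cannot fill $\bbC^{d+1}$. Concretely, consider $L=\Phi_d-\Phi_0+\sum_{m=1}^{d-1}\Phi_m$; this telescopes to $L=\sum_k[f_k(a_k)-f_k(a_{k-1})]-1$, the total image length minus one. Constants do not change it. Hence your bidiagonal-invertibility argument fails, and so does your claim that constants work even when some $a_{k-1}=a_k$. Moving the points does not supply the missing direction at an IET either, since $\partial_{a_j}L=f_j'(a_j)-f_{j+1}'(a_j)$ vanishes for translations. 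The fix is easy: also use the perturbation $\delta f_k(z)=z$ for all $k$, which changes $L$ by $\sum_k(a_k-a_{k-1})=a_d-a_0=1\neq 0$. Together with the constants, this makes $D\Phi$ onto $\bbC^{d+1}$ at every point, with no nondegeneracy assumption on the $a_j$. The rest of your argument then goes through, giving codimension $d+1$. One minor point: $f_k(a_{k-1})$ enters the equation indexed by $\tau(k)-1$, not $\tau(k)+1$.
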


If the points $a_j$ are real and satisfy $a_0<a_1<\dots<a_d$, and $f_j$ are real-symmetric and increasing, then
we will say that $T$ is a {\it real GIET}. 
In this case, the tuple $T$ defines a piecewise-analytic bijection of $[0,1]$ in an obvious fashion, generalizing IETs to the nonlinear case.
Real GIETs carry a real Banach manifold structure from the real slice $\cM^\RR_{\sigma, \eps}$ of $\cM_{\sigma, \eps}$.

While this is the most interesting situation, we will allow complex $a_j$ in the definition, in order to obtain and use hyperbolicity of a complex-analytic renormalization operator in $\cM_{\sigma, \eps}$.

The following definition describes important classes of real GIETs.
\begin{definition}
If $T$ is a real GIET and ...
\begin{itemize}
 \item ... $f_j$ are translations, then $T$ is called the interval exchange transformation (IET) of the intervals $[a_j, a_{j+1}]$.
\item ... $f_j$ are affine maps, then $T$ is called an affine interval exchange transformation (AIET).
\item ... $f_j$ are Moebius maps, then $T$ is called a projective interval exchange transformation (PIET).
\end{itemize}
\end{definition}

\section{Rauzy induction for IETs}

\begin{definition}
 An IET $T$ satisfies the \emph{Keane condition} if the future orbit of any endpoint $a_j$ does not contain any of the points  $a_k$ for $0<k<d$.
\end{definition}
Keane condition implies minimality, see \cite{Keane}. We will see later that all IETs with Keane condition are infinitely renormalizable. In fact, all infinitely renormalizable IETs satisfy Keane condition, see \cite[Sec.3, 5]{Viana} for more details.

\subsection*{Rauzy induction}

Suppose that an IET $T$ given by a substitution $\sigma = (\sigma_t, \sigma_b)$ and lengths $(\lambda_j)$ satisfies Keane condition.
Rauzy induction is the first-return map to a  subinterval under the dynamics of $T$.

Let $I_{k_1}$, $I_{k_2}$ be the rightmost intervals on top and bottom: $\sigma_t(k_1)=d$ and $\sigma_b(k_2)=d$ . Let $A_1$ and $A_2$ be their left endpoints. Note that the Keane condition implies $\lambda_{k_1}\neq \lambda_{k_2}$.
\begin{itemize}
 \item Case 1: If $\lambda_{k_1}>\lambda_{k_2}$, the interval $I_{k_1}$ is called a winner. One step of Rauzy induction creates an IET that is a first-return map to $[0,A_2]$. New interval lengths are $\tilde \lambda_j =\lambda_j$ for $j\neq k_1 $ and $\tilde \lambda_{k_1}=\lambda_{k_1}-\lambda_{k_2}$.
\item Case 2: If $\lambda_{k_1}<\lambda_{k_2}$, the interval $I_{k_2}$ is called a winner.
One step of Rauzy induction creates an IET that is a first-return map to $[0,A_1]$. New interval lengths are  $\tilde \lambda_j =\lambda_j$ for $j\neq k_2 $ and $\tilde \lambda_{k_2}=\lambda_{k_2}-\lambda_{k_1}$.
\end{itemize}
We will not write out the formulas for the new substitutions $\sigma_t, \sigma_b$; see \cite[Sec.2]{Viana}. To obtain renormalization  $\cR T$, we perform one step of Rauzy induction and rescale the domain $[0,A_2]$ or $[0,A_1]$ respectively to the unit length.
Since Keane condition is preserved under this operation, it is sufficient for infinite renormalizability of $T$.



 Consider a \emph{Rauzy graph}, defined in the following way. Its vertices are all possible permutations $\sigma= (\sigma_t, \sigma_b)$, and its oriented edges, labeled 1 or 2, correspond to case 1 and case 2 in the Rauzy induction.

To each IET $T$ that satisfies Keane condition,  we can associate a path in the Rauzy graph that corresponds to iterates of the Rauzy induction  --- in other words,  an infinite sequence $\gamma(T)$ of edges of the Rauzy graph.
\begin{definition}
The sequence $\gamma(T)$ is called the (combinatorial) \emph{rotation number} of $T$.
\end{definition}





\section{Renormalization of analytic GIETs}


Recall that for any IET $T$ with Keane property, $\cR T$ is a rescaled first-return map and hence can be written as a tuple of rescaled compositions of restrictions $T|_{[a_{j-1}, a_j]}=f_j$. We will use the same compositions to define $\cR \tilde T$ for $\tilde T\in \cM_{\sigma, \eps}$ close to $T$.  The point $A_1$ or $A_2$ that defines the rescaling factor, as well as the new endpoints of the intervals after one step of Rauzy induction, are images or preimages of $a_j$ under $f_j$. We will use the same formulas to define the rescaling factor and new endpoints of the intervals of $\cR \tilde T$.

With this definition, the formulas that define the renormalization operator $\cR$ depend only on the first vertex and edge in the path $\gamma(T)$ in the Rauzy graph.
Since in a neighborhood of an IET $T$, all maps $f_j$ are close to translations,  we obtain the following.

\begin{lemma}
\label{lem-def}
For any rotation number $\gamma$, for any  $\eps>0$, for any IET $T$ with $\gamma(T)=\gamma$, renormalization operator $\mathcal R$ is a compact analytic operator $\mathcal R \colon \cM_{\sigma(T), \eps}\to \cM_{\sigma(\cR T), \eps}$, well-defined on a neighborhood of $T$.

Renormalization coincides with Rauzy induction on analytic real GIETs.
\end{lemma}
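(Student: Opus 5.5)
The plan is to write $\cR$ explicitly as a composition of three kinds of elementary operations: evaluation at the new endpoints, composition of maps, and affine rescaling. Each of these is analytic, and compactness will come from the fact that the rescaling sends $U_\eps$ strictly inside the domain where the compositions are defined. Concretely, fix the first vertex $\sigma$ and the first edge of $\gamma$, and say case 1 ($I_{k_1}$ is the winner). For $T$ these data determine:
\begin{itemize}
\item the point $A_2$, given by $A_2=a_{k_2-1}$, and the new breakpoints, all of the form $a_j$ or $f_{k}^{-1}(a_j)$ for specific indices;
\item the new branches, which are either $f_j$ itself or the composition $f_{k_1}\circ f_{k_2}$ (the branch of the loser goes through the winner).
\end{itemize}
For $\tilde T=(\tilde f_j,\tilde a_j)$ near $T$ we use the same formulas. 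The rescaling is $h(z)=\tilde A_2 z$, and the new branches are $h^{-1}\circ \tilde F_j\circ h$ with $\tilde F_j\in\{\tilde f_j,\ \tilde f_{k_1}\circ\tilde f_{k_2}\}$. Case 2 is symmetric.

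\textbf{Step 1: well-definedness near $T$.} Since $T$ is an IET, its branches $f_j(z)=z+c_j$ extend to all of $\bbC$. Because $A_2<1$, the map $h$ sends $\overline U_\eps$ into $U_{\eps A_2}$, which is compactly contained in $U_\eps$. Composing translations moves this set by real amounts, and the relevant images stay within a neighborhood of $[0,1]$. I must check that the intermediate point $\tilde f_{k_2}(h(z))$ lands in $U_\eps$, where $\tilde f_{k_1}$ is defined. For $T$ itself, $f_{k_2}$ maps $h(U_\eps)$, a thinner neighborhood of the relevant real interval, into a $\eps A_2$-neighborhood of a subinterval of $[0,1]$, and hence with definite margin into $U_\eps$. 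By uniform closeness this persists for $\tilde T$ in a small neighborhood. This margin check is the main obstacle. One subtlety is near the endpoints $0,1$, where the horizontal margin is also $\eps A_2<\eps$. Since all the real points land in $[0,1]$ and the neighborhood shrinks by the factor $A_2$ uniformly, I expect the margin to be $\eps(1-A_2)>0$.

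For the inverse $\tilde f_k^{-1}(\tilde a_j)$ needed for the breakpoints, I will use the implicit function theorem. Since $f_k'\equiv 1$, the inverse is well defined and depends analytically on $(\tilde f_k,\tilde a_j)$ nearby. The boundary relations in the definition of a GIET for $\cR\tilde T$ follow formally, because they are algebraic identities between the same expressions that hold for the combinatorics $\sigma(\cR T)$. I will also check that $\tilde A_2\neq 0$ for small perturbations, so that $h^{-1}$ makes sense.

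\textbf{Step 2: analyticity and compactness.} Evaluation $(f,z)\mapsto f(z)$, composition $(f,g)\mapsto f\circ g$ (on a domain where $g$ maps with margin inside the domain of $f$), and multiplication by the analytically varying scalar $\tilde A_2^{\pm1}$ are all analytic maps between Banach spaces, so $\cR$ is analytic. For compactness, the new branches are restrictions to $U_\eps$ of functions that are analytic and uniformly bounded on a strictly larger domain $U_{\eps'}$, $\eps'>\eps$, thanks to the margin from Step 1. By Montel, restriction from bounded analytic functions on $U_{\eps'}$ to $U_\eps$ is compact. Hence $\cR$ factors through a compact operator, and the finite-dimensional parameters $a_j$ do not affect this.

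\textbf{Step 3: agreement with Rauzy induction.} For a real GIET near $T$ with the same first step combinatorics, the first-return map to $[0,\tilde A_2]$ is given exactly by these compositions, because the winner/loser comparison is an open condition. So $\cR$ coincides with rescaled Rauzy induction.
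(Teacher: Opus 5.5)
Your route is the paper's: its proof is the two-sentence assertion that the linear rescaling enlarges the domains of the branches of $\cR T$, that this persists near $T$, and that this gives compactness. Your Steps 2 and 3 flesh this out correctly.

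The gap is in the margin check of Step 1, which you rightly single out as the crux but which is false as stated. Since $\cR\tilde T$ must lie in $\cM_{\sigma(\cR T),\eps}$, the loser branch $h^{-1}\circ\tilde f_{k_1}\circ\tilde f_{k_2}\circ h$ has to be defined on all of $U_\eps$. So you need $\tilde f_{k_2}(h(U_\eps))\subset U_\eps$. But $f_{k_2}$ sends only $I_{k_2}=[a_{k_2-1},a_{k_2}]$ into $[0,1]$ (onto $[A_2,1]$), not all of $h([0,1])=[0,A_2]$. In case 1 we have $I_{k_2}\subset[0,A_1]$ and $A_1<A_2$, so the translation $f_{k_2}$ gives $f_{k_2}(A_2)=1+(A_2-a_{k_2})\ge 1+\lambda_{k_1}-\lambda_{k_2}$. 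Hence already at $z=1$ the intermediate point $\tilde f_{k_2}(\tilde A_2)$ lies outside $\overline U_\eps$ once $\eps<\lambda_{k_1}-\lambda_{k_2}$. A generic $\tilde f_{k_1}\in\cM_\eps$ cannot be evaluated there. For a rotation with $a_1<1/2$, for instance, one needs $f_2$ near $f_1(1-a_1)=2-2a_1>1$. Thus ``all the real points land in $[0,1]$'' holds only for the points of $I_{k_2}$. The margin $\eps(1-A_2)$ does not exist in the space as defined, and for small $\eps$ the operator does not even map into $\cM_{\sigma(\cR T),\eps}$.

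What makes the argument work is to let each branch $f_j$ live only on a neighborhood of its own interval, with the neighborhoods fixed by the base IET, as for commuting pairs of circle maps. Write $U_\delta(J)$ for the $\delta$-neighborhood of $J$. The loser branch is then needed only on $h(U_\eps(h^{-1}I_{k_2}))=U_{\eps A_2}(I_{k_2})$. The map $f_{k_2}$ sends this onto $U_{\eps A_2}([A_2,1])\subset U_\eps(I_{k_1})$, with margin exactly the $\eps(1-A_2)$ you predicted; in case 2 the analogous margin is $\eps(1-A_1)$. With this change, your analyticity, Montel and first-return arguments go through unchanged. The paper's short proof asserts the domain gain without addressing this point, so the same modification is implicitly needed there.

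A smaller slip: in case 1, $A_2=f_{k_2}(a_{k_2-1})$ is an image of an endpoint, not $a_{k_2-1}$ itself. It is the case-2 rescaling point $A_1=a_{d-1}$ that is an endpoint, and there the new breakpoint is the preimage $f_{k_2}^{-1}(a_{d-1})$.
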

\begin{proof}
  The domains of definition of the analytic maps forming $\cR T$ increase after linear rescaling in the definition of $\cR$. The same holds locally in a neighborhood of $T$, which implies the local compactness of $\cR$.
    \end{proof}


\section{Invariants of renormalization: boundary and total nonlinearity}

The  classical suspension construction embeds any IET into a flow on a translation surface (see \cite[Sec.12]{Viana}). This surface has conical singularities, located on the trajectories that emanate from $\{a_j\}$. Each singularity hence corresponds to a subset of endpoints $\{a_{i_k}\}$. This subset can be defined combinatorially as follows.

Let $\tau=\sigma_b\sigma_t^{-1}$, and extend it formally by $\tau(0)=0$, $\tau(d+1)=d+1$.
\begin{definition}
\label{def-sing}
A sequence  of endpoints $\{a_{i_k}\}_{k=1}^N$ with  $0\le i_k \le d$, numbered cyclically modulo  $N$, is associated with a singularity of $T$ if, for every $1\le k \le N$, $$\tau(i_k) +1 = \tau(i_{k+1}+1).$$
\end{definition}

Let $s=s(T)$ be the number of singularities of $T$.

\begin{definition}
For each singularity of a GIET $T$ corresponding to  $\{i_k\}_{k=1}^N$, define
 $$b(\{i_k\}_{k=1}^N) = \sum_{1\le k \le N} \log  \frac{f_{i_{k}+1}'(a_{i_{k}})}{f_{i_k}'(a_{i_{k}})} $$
where we formally set $f_0=f_{d+1}=id$.

 A tuple $\bar b = (b_1,\dots, b_s)$ over all singularities of $T$, ordered according to the smallest index $i_k$ in the list $\{i_k\}$,   is called the \emph{boundary} of $T$.
\end{definition}

We note (see \cite{GhU}) that the terms of $\bar b$ can be numbered canonically according to the number of a singularity on a translation surface. This numbering is invariant under renormalization.

 Let $\cM_{\sigma, \eps}^{\bar b}$ be a set of analytic GIETs with the boundary $\bar b$.

The following lemma was proven in \cite[Lemma 2.7.1]{GhU} for real GIETs. By analyticity, it also holds for analytic GIETs.
\begin{lemma}
\label{lem-boundary}
1. For real-symmetric GIETs, the boundary is an invariant of $C^1$ conjugacy on $\RR$.

2. In assumptions of Lemma \ref{lem-def},  renormalization operator $\mathcal R$ takes $\cM_{\sigma(T), \eps}^{\bar b}$ to  $\cM_{\sigma(\cR T), \eps}^{\bar b'}$ where $\bar b'$ differs from $\bar b$ by a permutation $p$ that depends only on the rotation number of $T$.
 \end{lemma}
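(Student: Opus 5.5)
For part 1, I would work directly with the conjugacy equation. Let $T,\tilde T$ be real GIETs with the same permutation and $h$ a $C^1$ diffeomorphism of $[0,1]$ with $h\circ T = \tilde T\circ h$. Then $h$ maps the endpoints $a_j$ to $\tilde a_j$, and on each interval $f_j = h^{-1}\circ\tilde f_j\circ h$. Differentiating at an endpoint gives
\[
\log f_j'(a) = \log\tilde f_j'(h(a)) + \log h'(a) - \log h'(f_j(a)),
\]
where the one-sided derivatives are taken at the relevant endpoint. Summing over a singularity cycle $\{i_k\}$, the ratio $f_{i_k+1}'(a_{i_k})/f_{i_k}'(a_{i_k})$ contributes $\log h'$ evaluated at $f_{i_k}(a_{i_k})$ and at $f_{i_k+1}(a_{i_k})$; the $\log h'(a_{i_k})$ terms cancel between the two factors. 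The combinatorial condition $\tau(i_k)+1=\tau(i_{k+1}+1)$ says precisely that the right endpoint of the image of $I_{i_k}$ coincides with the left endpoint of the image of $I_{i_{k+1}+1}$, i.e. $f_{i_k}(a_{i_k}) = f_{i_{k+1}+1}(a_{i_{k+1}})$. So the sum telescopes around the cycle and all $\log h'$ terms cancel, giving $b=\tilde b$. Two points need care: $h'$ is continuous, so its value at a point does not depend on the side from which it is approached, and the conventions $f_0=f_{d+1}=\mathrm{id}$ handle the points $0$ and $1$, where $h'(0)$ and $h'(1)$ cancel in the same way.

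For part 2, I would first treat a single Rauzy step, since the general statement follows by composition. The renormalized maps are compositions of the $f_j$, and the rescaling is an affine map; an affine rescaling does not change any $\log$ of a derivative ratio, because the factors cancel in each ratio. For one step, say Case 1 with winner $k_1$ and loser $k_2$, only one new map changes: $\tilde f_{k_2} = f_{k_1}\circ f_{k_2}$, or the analogous composition depending on the convention. Only one new endpoint appears, namely $A_2$ or its preimage. I would then list the new singularity cycles. All cycles not passing through the modified endpoints are unchanged term by term. For the cycle containing the affected endpoints, the chain rule splits $\log \tilde f'$ into $\log f_{k_1}'(f_{k_2}(\cdot)) + \log f_{k_2}'$. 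The extra terms then regroup into the terms of the old cycle: the old term at $a_{d}$, involving $f_{d+1}=\mathrm{id}$, is replaced by a term at the new point, and the derivative $f_{k_1}'$ at the point $A_2$ reappears with the opposite sign. The result is the same sum over the same conical point.

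This identifies each new boundary component with an old one via the singularity correspondence, which is a permutation determined by the edge of the Rauzy graph and hence by $\gamma(T)$. For complex analytic GIETs, the identity $b(\cR T)_{p(i)} = b(T)_i$ is an equality between analytic functions on a connected neighborhood. It holds on the real slice by the computation above, or simply because the computation is purely algebraic via the chain rule, so it holds everywhere.

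The main obstacle is the bookkeeping in the one-step case: tracking how the cycle structure $\tau$ changes under the Rauzy move and checking that the chain-rule terms match exactly. In particular, one must handle the degenerate cycles containing $0$ or $d$, where the identity maps enter. I would handle this by a case check of both Rauzy cases, using the explicit permutation update from \cite[Sec.2]{Viana}. Alternatively, one can argue via the suspension picture: $b$ is the sum of the jumps of $\log Df$ around a conical point, Rauzy induction just recuts the same surface, and the singularities persist.
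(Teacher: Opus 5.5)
\textbf{Comparison with the paper.} The paper does not prove this lemma itself. It quotes \cite[Lemma 2.7.1]{GhU} for real GIETs and transfers the statement to $\cM_{\sigma,\eps}$ ``by analyticity''. You replace the citation with a direct computation, which is a legitimate, self-contained alternative route.

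Your part 1 is complete. Differentiate $h\circ f_j=\tilde f_j\circ h$ and use $f_{i_k}(a_{i_k})=f_{i_{k+1}+1}(a_{i_{k+1}})$; then the $\log h'$ terms telescope around each cycle. You implicitly use $h(a_j)=\tilde a_j$, which is part of what a conjugacy of GIETs means.

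You also observe that $\cR$ on complex GIETs is given by the same compositions and rescalings. Hence the chain-rule identity holds verbatim off the real slice, with principal branches and derivatives near $1$. This is a cleaner justification of the complex case than the identity-theorem argument the paper alludes to.

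\textbf{The gap in part 2.} The bookkeeping you describe does not close in general. You treat ``the cycle containing the affected endpoints'' as a single cycle, and you say the old term at $a_d$ is replaced by the term at the new point.

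Label intervals by top position, so that $k_1=d$ and $\tau(k_2)=d$, and put $\phi(i)=\tau^{-1}(\tau(i)+1)-1$; the cycles of $\phi$ are the singularities. In case 1 the changes are:
\begin{itemize}
\item a term $+\log f_d'(A_2)$ is added at $a_{k_2-1}$;
\item a term $-\log f_d'(1)$ is added at $a_{k_2}$;
\item the right-endpoint term $-\log f_d'(1)$ at $1$ becomes $-\log f_d'(A_2)$ at $A_2$.
\end{itemize}
The points $a_{k_2}$ and $1$ lie in one cycle, since $\phi(k_2)=d$. But $a_{k_2-1}$ is generally in a different cycle. For example, $\tau=(5,4,3,2,1)$ gives $k_2=1$ and cycles $\{0,2,4\}$ and $\{1,3,5\}$. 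If the index $d$ stayed in its old cycle, the two boundary components would change by $\mp\log f_d'(A_2)$, and only their sum would be invariant.

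\textbf{How to fix it.} What makes each component invariant is that the Rauzy move rewires $\phi$. Let $i^*$ be defined by $\tau(i^*)=d-1$, and assume $\tau(d)\neq d-1$; otherwise $\tau'=\tau$ and $k_2\to d\to k_2-1$ lie in one cycle. Then the new map satisfies $\phi'(i^*)=d$, $\phi'(d)=k_2-1$ and $\phi'(k_2)=\phi(d)$, with all other arrows unchanged. Consequently:
\begin{itemize}
\item The new endpoint $A_2=f_{k_2}(a_{k_2-1})$ joins the cycle of $a_{k_2-1}$. There its term $-\log f_d'(A_2)$ cancels the chain-rule term $+\log f_d'(A_2)$.
\item In the cycle of $a_{k_2}$, the chain-rule term $-\log f_d'(1)$ takes over the role of the deleted endpoint $1$.
\end{itemize}

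Case 2 is the mirror image. The new endpoint is $f_{k_2}^{-1}(a_{d-1})$, a preimage of $A_1$, not of $A_2$ as you wrote. It enters the cycle of $a_{d-1}$ immediately before it, and the terms $\pm\log f_d'(a_{d-1})$ cancel there. Again, $-\log f_d'(1)$ at $a_{k_2}$ replaces the term of $1$.

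With this rewiring made explicit, your argument becomes a complete proof, and it identifies the permutation $p$ concretely.
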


Furthermore, 

\begin{definition}
 Total nonlinearity of an analytic GIET is
 $$c(T) = \sum_{j=1}^{d} \int_{a_{j-1}}^{a_{j}} \frac{f_j''}{f_j'}dz =  \sum \log (f_j'(a_{j})) - \log (f_j'(a_{j-1})).$$
\end{definition}
Integrals are taken over any curves in $U_\eps$.
The second definition requires choosing continuous branches of $\log f_j'$ in $U_\eps$; since we assume that $f_j$ are close to translations, we may and will use principal branches of $\log$ from now on.

Let $\cM_{\sigma, \eps}^c$ be the set of analytic GIETs with total nonlinearity $c$. Total nonlinearity of any IET and any  AIET is zero.
By definition, singularities are in one-to-one correspondence with cycles $\{i_k\}$ of the bijection of $\{1,\dots, d\}$ given by $\tau^{-1}(\tau(i)+1)-1$. Hence the union of all sequences $\{i_k\}$ over all singularities is $\{0, 1, \dots, d\}$, therefore
$$\sum_{j=1}^s b_j = -c(T).$$
Hence, as a corollary of Lemma~\ref{lem-boundary}, the following is true:

\begin{lemma}
In the assumptions of Lemma \ref{lem-def}, renormalization operator $\mathcal R$ takes an open subset of $\cM_{\sigma(T), \eps}^c$ to  $\cM_{\sigma(\cR T), \eps}^c$.
\end{lemma}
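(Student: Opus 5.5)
The statement follows from Lemma~\ref{lem-boundary} together with the identity $\sum_{j=1}^s b_j=-c(T)$ derived just above. The plan is to show that $c$ is a function of the boundary $\bar b$ that does not see the order of the $b_j$. Since $\cR$ only permutes the boundary entries, it then preserves $c$.

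\textbf{Step 1: $c$ is determined by the boundary.} For any $\tilde T\in\cM_{\sigma(T),\eps}$ near $T$, I would check that $\sum_j b_j(\tilde T)=-c(\tilde T)$.

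Expand $\sum_j b_j=\sum_k\log f'_{i_k+1}(a_{i_k})-\log f'_{i_k}(a_{i_k})$ over all singularities. Each index $i\in\{0,\dots,d\}$ occurs exactly once, because the singularities correspond to the cycles of the bijection $i\mapsto\tau^{-1}(\tau(i)+1)-1$. The terms involving $f_0=f_{d+1}=\mathrm{id}$ vanish, since $\log 1=0$. What remains is
\[
\sum_{i=0}^{d-1}\log f'_{i+1}(a_i)-\sum_{i=1}^{d}\log f'_i(a_i)=-c(\tilde T).
\]
This is a purely formal identity, valid for complex $a_j$ as well.

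The one point needing care is the choice of branches of $\log$. Near $T$ all $f_j'$ are close to $1$, so principal branches are continuous and the identity holds exactly, not merely modulo $2\pi i$. This is where I have to restrict to a neighborhood of $T$, which is why the statement only asserts the conclusion on an open subset.

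\textbf{Step 2: renormalization permutes the boundary.} Take $\tilde T$ in $\cM^c_{\sigma(T),\eps}$ and in the neighborhood of $T$ where $\cR$ is defined (Lemma~\ref{lem-def}). Let $\bar b$ be its boundary. By Lemma~\ref{lem-boundary}(2), $\cR\tilde T$ has boundary $p(\bar b)$ for a permutation $p$. The boundary of $\cR\tilde T$ is computed with principal branches, which is legitimate because $\cR\tilde T$ is again close to the IET $\cR T$.

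\textbf{Step 3: conclude.} Applying Step 1 to $\cR\tilde T$ gives
\[
c(\cR\tilde T)=-\sum_j p(\bar b)_j=-\sum_j b_j=c(\tilde T)=c.
\]
So $\cR$ maps the open subset (the intersection of $\cM^c_{\sigma(T),\eps}$ with that neighborhood) into $\cM^c_{\sigma(\cR T),\eps}$.

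The main obstacle is the branch issue in Steps 1 and 2. Lemma~\ref{lem-boundary}, proved in \cite{GhU} for real maps, must hold with principal logarithms for complex $\tilde T$. I would handle this by analytic continuation. Both sides of the relation $b(\cR\tilde T)=p(b(\tilde T))$ are analytic functions on a connected neighborhood of $T$, and they agree on the real slice, which is totally real of full dimension. Hence they agree on the whole neighborhood.
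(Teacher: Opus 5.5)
Your proposal is correct and follows the paper's argument: the paper obtains this lemma as an immediate corollary of Lemma~\ref{lem-boundary} together with the identity $\sum_{j=1}^s b_j=-c(T)$, and it derives that identity exactly as in your Step 1, from the fact that the cycles of $i\mapsto\tau^{-1}(\tau(i)+1)-1$ partition $\{0,\dots,d\}$. Your discussion of principal branches, and your identity-principle extension of Lemma~\ref{lem-boundary} from the real slice to complex GIETs, spells out what the paper covers with the phrase ``by analyticity.''
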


\section{Dynamics on AIETs and the statement of the main theorem}

Renormalization operator preserves the class of AIETs. Here we describe its action on AIETs, cf. \cite[Sec.3.2]{Gh}.

Let $\cV$ be the set of vertices of the Rauzy graph.
An AIET is defined by the substitution $\sigma\in \cV$, length vector $\bar \lambda = \{\lambda_j\}$ with
\begin{equation}
\label{eq-sum}
\sum_{j=1}^d \lambda_j=1,
\end{equation}
 and log-slopes of affine maps $f_j$ on the intervals $[a_j, a_{j+1}]$, given by $\rho_j=\log f_j'$ and satisfying
 \begin{equation}
\label{eq-sum-w}
\sum_{j=1}^d \exp(\rho_j)\lambda_j=1,
\end{equation}
 Hence the space $\cA_d$ of real AIETs can be identified with a codimension-1 subset of $\cV\times P\RR^d \times \RR^d$ where $P$ stands for projectivization.
Let $\cT_d\subset \mathcal A_d$ be the subset of IETs inside this space.

Let us fix an invariant (under $\cR$) compact set $\Lambda \subset \cT_d$. Let $T\in\Lambda$. Compactness implies that the lengths of the intervals in the forward orbit of $T$ under renormalization are detached from zero.

For one step of Rauzy induction on AIETs, formulas for the lengths change to $\tilde \lambda_{k_1} = \lambda_{k_1}-\exp(\rho_{k_2})\lambda_{k_2}$ in case 1 and $\tilde \lambda_{k_2} = \lambda_{k_2}-\exp(\rho_{k_1})\lambda_{k_1}$ in case 2. These should be normalized to the unit length to obtain the length vector for $\cR T$.  On the slopes, one step of renormalization acts by applying a linear operator $A=I+\delta_{k_1k_2}$ or $A=I+\delta_{k_2k_1}$ in case 1 and 2 respectively to the vector  $\bar \rho = \{ \rho_j\}$. We will write $A=A_{\gamma(T), 1}$ and remark that this matrix only depends on the first vertex and edge in $\gamma(T)$.

Under iterates of renormalization, the action on the slopes is given by matrices $$A_{\gamma(T),n} =A_{\gamma(\cR^n T), 1} \circ \dots \circ A_{\gamma(T), 1}.$$ These were called intersection matrices in \cite{Gh}: an $(i,j)$-term of such matrix can be interpreted as the number of visits of the $j$-th interval of  $\cR^n T$ to the $i$-th interval of $T$ under the iterates of $T$ that occur before the first return.

It is well-known (\cite[Corollary 1.21]{Viana}) that for any IET $T$, for some $N$ and any $n>N$, the matrices $A_{\gamma(T),n}$ have strictly positive terms. Note that in our case, $N$ is uniformly bounded by compactness of $\Lambda$.

The action of renormalization on AIETs is a linear cocycle over the action of $A_{\gamma(T),n}$.
Moreover,  the above formulas for lengths and slopes imply that the differential of renormalization $\mathcal R^n$ in the space $\cA_d$ at any point $T\in \cI_d$ has the form $$B_{\gamma(T), n}= \begin{pmatrix}P({}A_{\gamma(T),n}^t)^{-1} & 0 \\ * & A_{\gamma(T),n}\end{pmatrix}$$ restricted to the linear subspaces \eqref{eq-sum}, \eqref{eq-sum-w},  where $P(A_{\gamma(T),n}^t)^{-1}$ is the projectivization (to account for rescalings) of the matrices  $(A_{\gamma(T),n}^t)^{-1}$.

The restriction of  $\cR$ to IETs is  uniformly expanding at $T\in \Lambda$. This is straightforward to see: since matrices $A_{\gamma(T),n}$, $T\in \Lambda$ have positive terms for some uniformly bounded $n$, they uniformly contract the simplex of positive vectors and hence their projectivizations exponentially contract at a uniform rate. Therefore   $P(A_{\gamma(T),n}^t)^{-1}$ uniformly expand (see \cite[Remark 4.26]{Viana}).
The analogy is with the action of the Gauss map on a compact invariant set of irrationals of bounded type.




Since renormalization preserves the boundary, matrices $B_{\gamma(T), n}$ have an invariant  distribution $V_{\gamma(T), n}$ in the tangent bundle to $\cA_d $ of codimension $s=s(T)$, given by the condition that the boundary is zero,  with neutral dynamics in the transversal direction. Since this distribution is defined by the restrictions on the slopes only, we can talk about the dynamics of $A_{\gamma(T), k}$ in this distribution.

In the Main Theorem below, we will assume that this dynamics is uniformly hyperbolic with $l$ unstable directions. The dynamics of $B_{\gamma(T), n}$ will then have $l+(d-1)$ unstable and $s$ neutral directions.

Recall that $\sigma_n$ are substitutions corresponding to $\cR^n T$  (note that there are only finitely many different substitutions in this list). Recall that $\cM_{\sigma_n, \eps}$ are the corresponding Banach manifolds of GIETs, and $\cM_{\sigma_n, \eps}^{\bar b}\subset \cM_{\sigma_n, \eps}$ are codimension-$s$ subspaces of GIETs with boundary $\bar b$. Let $\cM_{\sigma, \eps}^\RR$, $(\cM_{\sigma, \eps}^{\bar b})^\RR$ be real slices of these spaces.

\begin{theorem}[Main Theorem]
  \label{thm-main1}
  For any invariant compact set $\Lambda \subset \cT_d$, let $T\in\Lambda$.
Assume that the  associated action of the sequence of intersection matrices $A_{\gamma(T), n}$ is uniformly hyperbolic with $l$ unstable directions in restriction to  $V_{\gamma(T), n}$.
Then the following holds:

\begin{enumerate}
\item The orbit of $T$ is hyperbolic with $l+(d-1)$ unstable directions under renormalization in the spaces $\cM_{\sigma_n, \eps}^{\bar 0}$.

\item For any $\eps>0$, for some $\kappa>0$, there exists a  local real-analytic submanifold of $\cM_{\sigma, \eps}^{\bar 0}$ at $T$ of codimension $l+(d-1)$ that is  formed by real GIETs that are $C^{1+\kappa}$-conjugate to $T$. This submanifold coincides with the local stable submanifold of $T$ under $\cR$ in $(\cM_{\sigma, \eps}^{\bar 0})^\RR.$

\item Moreover, for any real $\bar b$ sufficiently close to $\bar 0$, there exists a non-empty local   real-analytic manifold in $\cM_{\sigma, \eps}^{\bar b}$ of codimension $l+(d-1)$ formed by mutually $C^{1+\kappa}$-conjugate real GIETs with boundary $\bar b$. These submanifolds coincide with local stable manifolds of $\cR$ in $(\cM_{\sigma, \eps}^{\bar b})^\RR.$

\item Finally, for non real-symmetric GIETs which  belong to the same local stable submanifold of $\cR$ in the complexified space
$\cM_{\eps,\sigma}^{\bar b}$ with $\bar b\approx \bar 0$ as above, we have the following. They possess invariant quasisymmetric curves with endpoints $0,1$ in $\bbC$ the dynamics on which are quasiconformally conjugate.
\end{enumerate}
  
\end{theorem}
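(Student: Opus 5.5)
The plan is to study the derivative $D\cR^n$ at the points of the orbit $\{\cR^n T\}$ inside the Banach manifolds $\cM_{\sigma_n,\eps}^{\bar 0}$. Its tangent space splits into a finite-dimensional ``affine'' part, namely the tangent space to $\cA_d\cap \cM^{\bar 0}$, and an infinite-dimensional ``nonlinear'' complement. On the affine part, $D\cR^n$ is the block matrix $B_{\gamma(T),n}$. Restricted to the zero-boundary distribution $V_{\gamma(T),n}$, it has exactly $d-1$ expanding directions from $P(A^t_{\gamma(T),n})^{-1}$, which expands uniformly over $\Lambda$ since the $A_{\gamma(T),n}$ are positive for uniformly bounded $n$. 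By hypothesis it has $l$ further expanding directions from $A_{\gamma(T),n}$ on slopes, and the rest is contracting.

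For the nonlinear complement, I would parametrize a tangent vector by the variations $v_j$ of $f_j$ modulo affine maps. A convenient choice is the second-order part, e.g. $(\log f_j')'$, or equivalently $v_j''$. I would use that $\cR^n T$ is built from compositions of translations of length $q_n$, followed by rescaling by factors $\lambda^{(n)}\to 0$ exponentially; this decay holds uniformly on $\Lambda$ by the bounded geometry. Differentiating a composition of translations simply sums the translated variations, so the nonlinearity $\eta=(\log f')'$ transforms by a sum over at most $q_n$ pieces, each multiplied by the rescaling factor. Since the pieces tile the interval, the total $L^1$ contribution is preserved, and the sup norm on the enlarged domain gains the factor $\lambda^{(n)}$ by Cauchy estimates. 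This gives uniform contraction of the nonlinear part, with $D\cR^n$ upper triangular with respect to affine versus nonlinear. Combined with compactness of $\cR$ (Lemma \ref{lem-def}) and the preservation of boundary and total nonlinearity, this yields item (1): a dominated splitting with $l+(d-1)$ unstable directions, and stable directions contracted at a uniform rate.

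For items (2) and (3), I would apply the Hadamard–Perron theorem for a sequence of compact analytic maps near a hyperbolic orbit. This gives a local analytic stable manifold of codimension $l+(d-1)$, which is real-analytic on the real slice since $\cR$ commutes with complex conjugation. To identify it with the rigidity class, I would argue in both directions.

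For the first direction, take a real GIET $\tilde T$ on the stable manifold. Its renormalizations converge exponentially to $\cR^nT$. The standard argument then builds the conjugacy $h$ as the limit of the piecewise maps matching the dynamical partitions. The exponential convergence, together with the bounded geometry from $\Lambda$, gives Hölder control of $\log h'$, hence $C^{1+\kappa}$.

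For the converse, if $\tilde T$ is $C^{1+\kappa}$-conjugate to $T$, then conjugating the first-return maps by $h$ shows that the rescaled renormalizations have nonlinearity tending to zero. Their affine parameters must also track $\cR^nT$, since otherwise the unstable directions would push the combinatorics away from $\gamma(T)$. So $\tilde T$ lies on the stable manifold.

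For (3), hyperbolicity is an open condition. Since $\bar b$ is a neutral parameter preserved up to the permutation $p$ from Lemma \ref{lem-boundary}, I would treat the $\bar b$-slices as a smooth family and obtain a hyperbolic invariant set (a horseshoe of orbits) for $\bar b\approx\bar 0$. I would then repeat the argument above within $\cM^{\bar b}$. The horseshoe is invariant only up to $p$, so I would work over the finite set of permuted values.

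For (4), two complex GIETs on the same complex stable leaf are close to real ones. I would pull back $[0,1]$ through the renormalization tower to obtain invariant curves. Bounded geometry controls these curves, which makes them quasicircle arcs, and a holomorphic-motion ($\lambda$-lemma) argument along the analytic family joining the two maps gives the quasiconformal conjugacy.

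The main obstacle I expect is the contraction estimate on the nonlinear part in the complex domain $U_\eps$ while the boundary is nonzero. The compositions are no longer near translations exactly, and distortion can accumulate over $q_n$ iterates. I would first try to control it with Koebe-type distortion bounds on the complex neighborhoods of the dynamical intervals.
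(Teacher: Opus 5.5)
\textbf{The gap is in your contraction estimate for the nonlinear complement, which carries all of item (1).} Linearizing a renormalized branch at an IET gives
\[
\delta\tilde\eta(y)=\Lambda\sum_{i=1}^{q}v_{j_i}''(\Lambda y+s_i),
\]
where $\Lambda$ is the rescaling factor and $q$ the return time. The only available bound is $\Lambda q\,\|v''\|$. For bounded type $\Lambda q\asymp 1$, since the towers have length comparable to $1$, so nothing is gained. The sum is a Riemann sum that converges to the piecewise constant function given by the tower averages of $v''$, not to zero. Cauchy estimates on the enlarged domain cannot help: they control derivatives, and constants are analytic on every domain.

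Your own $L^1$ remark in fact refutes the claim. The identity $\int_0^1\delta\tilde\eta=\int_0^1\delta\eta$ (integrating piecewise) is the invariance of total nonlinearity, so $\delta\tilde\eta$ cannot tend to zero along directions with $\delta c\neq 0$. What does gain a factor $\Lambda$ is the next derivative, $\delta\tilde\eta'=\Lambda^2\sum_i v_{j_i}'''(\Lambda y+s_i)=O(\Lambda\|v'''\|)$, i.e.\ the linearized Schwarzian. This is the Ghazouani--Ulcigrai estimate the paper uses, and it gives contraction toward PIETs, not toward AIETs.

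Consequently your affine/nonlinear splitting is missing a finite-dimensional middle layer: the M\"obius directions modulo affine ones, i.e.\ $v_k''\equiv c_k$ constant. On these, $D\cR^n$ acts by the slope cocycle multiplied by the rescaling factor, $c\mapsto \Lambda A_{\gamma(T),n}c$, and this map preserves $\sum_k\lambda_k c_k$. So this block has a neutral direction, and it contracts only on the hyperplane of zero total nonlinearity. The contraction rate there comes from the spectral gap of the Rauzy--Veech cocycle, not from $\lambda^{(n)}$.

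You need exactly this step, which the paper imports from Ghazouani's Proposition 20: PIETs with zero total nonlinearity converge to AIETs. You also need $\sum_j b_j=-c(T)$, which is how the zero-boundary hypothesis removes the neutral direction. Only with both layers in place does the count of $l+(d-1)$ unstable directions follow.

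The obstacle you flag for $\bar b\neq\bar 0$ is secondary: the failure already occurs at $\bar b=\bar 0$, at the linear level. The paper handles $\bar b\neq\bar 0$ by perturbing the hyperbolic orbit inside the finite-dimensional family $\cP_n^{\bar b}$ to obtain $T_{\bar b}$, and then reusing the Schwarzian contraction.
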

\begin{remark}[Computation of codimension]
 Since the matrices $A_{\gamma(T), n}$ preserve a symplectic form (see \cite[Proposition 5.20]{Viana}), hyperbolicity in a codimension-$s$ distribution inside a $d-1$-dimensional space of slope vectors implies $l=(d-s-1)/2$.

 We can express codimension in terms of the genus of the translation surface, given by  $2g=d-s+1$. Namely, we get $l=g-1$ and hence the codimensions in Main Theorem become $(d-1)+(g-1)$ (inside the space of GIETs with fixed boundary) and $(d-1)+(g-1)+s$ (in the ambient space). This agrees with Problem 1 in \cite{MMY-int}, and with the result of \cite{Gh} that is proved for $s=1$.

\end{remark}

\section{Proof of the Main Theorem}

\subsection{Dynamics on PIETs}

Note that PIETs form an invariant finite-dimensional sequence of local manifolds $\cP_n \subset \cM_{\sigma_n, \eps}^0$ at the orbit $\{\cR^n T\}$. Let $p_n$ be permutations from Lemma \ref{lem-boundary} associated with the $n$-th renormalization of  $T$, note that they only depend on the rotation number of $T$.  Assume $\bar b_n=p_n(\bar b)$ and let $\cP_n^{\bar b_n} \subset \cM_{\sigma_n,\eps}^{\bar b_n}$ be a submanifold of PIETs. Since renormalization of a Moebius IET is again Moebius, we have locally near an IET $T$ that $$\cR (\cP_n^{\bar b_n}) \subset \cP_{n+1}^{\bar b_{n+1}}.$$

\begin{proposition}
  \label{prop-piet}
  \begin{itemize}
\item[(I)]In assumptions of the Main Theorem, the orbit of $T$ within $\cP_n^{\bar 0}$ is hyperbolic, with $l+(d-1)$ unstable directions.

\item[(II)] For any sufficiently small $\|\bar b\|$, there exists a PIET $T_{\bar b}\in\cP_0^{\bar b}$ such that its orbit within  $\cP_n^{\bar b_n}$ is hyperbolic, with $l+(d-1)$ unstable directions.
\end{itemize}

  \end{proposition}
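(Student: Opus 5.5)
Part (I) reduces to linear algebra on the finite-dimensional manifolds $\cP_n^{\bar 0}$, and part (II) follows from (I) by persistence of hyperbolic orbits under small perturbation.

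\textbf{Coordinates on PIETs.} Near an IET, a PIET is determined by three pieces of data: the length vector $\bar\lambda$, the log-slope vector $\bar\rho$, where $\rho_j=\log f_j'(a_{j-1})$, and a nonlinearity vector $\bar\nu$. The entry $\nu_j$ measures the Moebius distortion of $f_j$; a natural choice is $\nu_j=\log f_j'(a_j)-\log f_j'(a_{j-1})$, which up to a first-order correction is a rescaled Schwarzian-type parameter. A Moebius map is fixed by its values at the two endpoints plus one additional parameter, so the count matches. In these coordinates $\cP_n$ is a finite-dimensional manifold containing $\cA_d$ as the submanifold $\bar\nu=0$. Along it, the boundary and the total nonlinearity $c$ are analytic functions of $(\bar\rho,\bar\nu)$.

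\textbf{Plan for (I).} At the IET orbit, $D\cR^n$ restricted to $\cP_n$ should be block lower-triangular with respect to the splitting $T\cA_d\oplus(\bar\nu\text{-directions})$. This is because $\cA_d$ is invariant, so the $\cA_d$-block is exactly $B_{\gamma(T),n}$.

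Next I compute the quotient action on the $\bar\nu$-directions, i.e. how the distortions of a first-return branch compose. A first-return branch of $\cR^nT$ on interval $j$ is a composition of branches $f_i$, one per visit to interval $i$ of $T$. To first order at a translation, the Moebius distortion parameter of a composition, rescaled to unit length, is the sum of the individual distortions, each weighted by the square of its relative length.

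Since all the relevant intervals are shrinking at an exponential rate, the result is that the $\bar\nu$-block contracts. Roughly, the nonlinearity parameter scales like $\lambda_i^{(n)}$, giving an entry of size $A\cdot\mathrm{diag}(\lambda^{(n)})$, and that size tends to $0$. So the transversal block is uniformly contracting for $n$ beyond the uniform $N$ given by compactness of $\Lambda$.

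Restricting to boundary $\bar 0$ cuts out the $s$ neutral directions of $B$. The uniform expansion of $P(A^t)^{-1}$ contributes $d-1$ unstable directions. The hypothesis on $A_{\gamma(T),n}$ restricted to $V_{\gamma(T),n}$ contributes $l$ more unstable directions and leaves the remaining slope directions stable. I still need to check that the boundary-zero condition is transverse to the $\bar\nu$-directions so that this cut is compatible; equivalently, that the boundary depends on $\bar\nu$ only through a term already controlled.

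Hyperbolicity is robust under off-diagonal blocks, by a standard cone-field argument for a block-triangular sequence with uniform bounds. So the orbit in $\cP_n^{\bar0}$ is hyperbolic with $l+(d-1)$ unstable directions.

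\textbf{Plan for (II).} Apply persistence of hyperbolic sets to the finite-dimensional family $\cP_n^{\bar b_n}$, viewed as an analytic perturbation of $\cP_n^{\bar 0}$ in the parameter $\bar b$. The point to handle is that we are working with a nonautonomous sequence of maps on a compact family rather than with a single map. Compactness of $\Lambda$ together with the finitely many substitutions gives uniform $C^1$ bounds on $\cR$ near the orbit, and $C^1$-closeness in $\bar b$. The shadowing lemma, or equivalently a graph-transform / contraction argument on the sequence space $\ell^\infty$ of orbits, then produces a unique nearby bounded orbit $T_{\bar b,n}\in\cP_n^{\bar b_n}$ whose differential cocycle is a small perturbation of the one from (I). It is therefore hyperbolic with the same number of unstable directions, and we set $T_{\bar b}=T_{\bar b,0}$.

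\textbf{Main obstacle.} I expect the hardest step to be the precise computation of the $\bar\nu$-block, and in particular showing that it has no neutral or expanding part. The total nonlinearity $c$ is invariant, so there may be a neutral direction among the $\bar\nu$ coordinates corresponding to $c$. I would try to show that this direction is precisely the one killed by fixing $\bar b$, using $\sum b_j=-c$, so that within $\cP_n^{\bar b_n}$ no neutral direction remains.
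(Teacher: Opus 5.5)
Your overall architecture matches the paper's: an invariant AIET locus, the hypothesis on $A_{\gamma(T),n}$ for the AIET block, contraction transverse to AIETs, and persistence of a hyperbolic orbit for (II). The paper gets (II) by identifying $\cP_n^{\bar b}$ with $\cP_n^{\bar 0}$ via affine post-composition, and your $\ell^\infty$ implicit-function version is fine.

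The gap is in the step that carries (I): the contraction of the $\bar\nu$-block. Your scaling claim is wrong. The quantity $\nu_j=\log f_j'(a_j)-\log f_j'(a_{j-1})$ is unchanged by rescaling. Composing along the towers then gives, to first order at the IET,
\[
\nu^{(n)}_j=\sum_{i} A_{ij}\,\frac{\lambda^{(n)}_j}{\lambda_i}\,\nu_i ,\qquad A=A_{\gamma(T),n},
\]
where $\lambda^{(n)}$ are the unrescaled lengths of the intervals of $\cR^nT$. The weight is the \emph{first} power of the relative length, not the square. Square weighting is the Schwarzian scaling, which the paper uses later to contract GIETs onto PIETs, but M\"obius maps have zero Schwarzian.

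Since $\lambda_i=\sum_j A_{ij}\lambda^{(n)}_j$, the matrix $A\,\mathrm{diag}(\lambda^{(n)})$ has row sums $\lambda_i$ and does not tend to $0$. The block $M=\mathrm{diag}(\lambda^{(n)})A^t\,\mathrm{diag}(\lambda)^{-1}$ is column-stochastic and has eigenvalue $1$ along $c=\sum_j\nu_j$. So your assertion that the transversal block is uniformly contracting is false. It also contradicts the invariance of $c$ that you point out yourself at the end.

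Killing the $c$-direction is necessary but not sufficient. It does happen. Since $\sum_j b_j=-c$ vanishes identically on AIETs, only $s-1$ of the boundary conditions cut the slope block $B$, and the remaining one is $c=0$. A dimension count then shows that the quotient of $T\cP_n^{\bar 0}$ by the tangent space of zero-boundary AIETs is exactly the hyperplane $\{\sum_j\nu_j=0\}$.

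You still need a reason why $M$ contracts that hyperplane, and your length-scaling argument provides none. The missing idea is the one behind Ghazouani's Proposition~20, which the paper cites. The $\nu^{(n)}_j$ are Birkhoff sums of the nonlinearity over the Rokhlin towers, and they tend to zero precisely because the total nonlinearity is zero. Concretely, for $n\ge N$ the matrix $A$ is positive and the geometry along $\Lambda$ is bounded. Hence the entries of $M$ are uniformly bounded below, and $M$ contracts the $\ell^1$-norm on $\{\sum_j\nu_j=0\}$ by a uniform factor $<1$ (Birkhoff--Dobrushin contraction of stochastic matrices). With this in place of your scaling estimate, the block-triangular cone argument gives (I).
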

\begin{proof}
The first statement is proved in \cite[Proposition 20]{Gh}. Namely, since the total nonlinearity of the maps in $\cP_n^{\bar 0}$ is zero, PIETs converge to AIETs.
Since the boundary is preserved,  these AIETs will also have zero boundary. Inside the distributions $V_{\gamma(T), n}$ that correspond to  AIETs with zero boundary, renormalization operator has  $l+(d-1)$ unstable directions by the assumptions.

Second statement holds true due to structural stability of hyperbolicity in finite-dimensional manifolds.
Formally, we can easily construct a projection from $\cP_n^{\bar b} $ to $\cP_n^{\bar 0}$ that is one-to-one and close to identity, e.g. by post-composing $f_j$ with affine maps, and hence we can treat $\cR|_{\cP_n^{\bar b} }$ as a small perturbation of $\cR|_{\cP_n^{\bar 0} }$.
\end{proof}

\subsection{Extending hyperbolicity to  analytic GIETs}
The standard technique for showing that analytic real GIETs converge to PIETs is to use the Schwartzian derivatives (cf. \cite{GhU}).
Namely, suppose that $T$ is an IET satisfying assumptions of the Main Theorem, and let $\tilde T$ be a real GIET that is $n$ times renormalizable so that  its renormalizations stay in small neighborhoods of $\cR^i T$ for $i\leq n$. Let $f_j^n$ be the maps and $a_j^n$ be the points that correspond to $\cR^n(\tilde T)$. In \cite[Proposition 4.4.1, 4.4.2]{GhU}, the authors prove an exponential decay of Schwartzians:
$$\sup_{[a_j^n, a_j^{n+1}]} S(f_j^n)  < c(T) |P_n| \sup_{[0,1]} S(T)$$
where $|P_n|$ is the maximal size of the intervals in the $n$-th dynamical partition associated with $\cR^n T$ (cf. \cite[Proposition 4.4.1, 4.4.2]{GhU}). Since we consider $T\in \Lambda$ for a compact invariant set  $\Lambda$,  the latter decays exponentially.


Assuming that $\tilde T$ is sufficiently close to $T$, the Schwarzians   $S(f_j^n)$ are well-defined analytic functions on a fixed neighborhood $U_{\eps'}[0,1]\Supset U_\eps([0,1])$ since renormalization improves the domain.
Hence their norms decrease exponentially and uniformly in all of $U_\eps([0,1])$. Thus,
the linear functionals $DS(f_j)$ are contracted exponentially, hence $D\cR$ exponentially contracts towards PIETs.

This proves part (1) of the Main Theorem. Furthermore, in view of Proposition~\ref{prop-piet}, the orbit of PIET $T_{\bar b}\in\cP_0^{\bar b}$ is hyperbolic with $l+(d-1)$ unstable directions in $\cM_{\sigma, \eps}^{\bar b}$. The analyticity of the stable manifolds of $T=T_{\bar 0}$ and $T_{\bar b}$
follows from the infinite-dimensional version of the Hadamard-Perron Theorem \cite{elbialy1}.
The $C^{1+\kappa}$ conjugacy follows from geometric convergence of  renormalization in this case in the standard fashion.
This proves parts (2), (3) of the Main Theorem.

The proof of part (4) of the Main Theorem is similar to the argument used in \cite{Ya5,GY}. The local stable manifold of renormalization
$$W=W^s_{\text{loc}}(T_{\bar b})\subset \cM_{\sigma, \eps}^{\bar b}$$
is a complex analytic manifold. The orbits under the GIETs of the endpoints $0$, $1$ move holomorphically over $W$. Applying  the infinite-dimensional Lambda-lemma  \cite{BR}, we see that their closures move holomorphically as well, and are quasiarcs, the dynamics on which is quasiconformally conjugate to that of $T_{\bar b}$.








\bibliographystyle{amsalpha}
\bibliography{biblio}

\end{document}